\documentclass[10pt,a4paper]{article}

\usepackage{stix}
\usepackage{mathrsfs}
\usepackage{amsmath}    
\usepackage{amsthm}
\usepackage[cp1250]{inputenc}                       
\usepackage{url}
\usepackage{mathtools}
\usepackage{amsfonts}
\usepackage{relsize}
\usepackage{amsxtra}
\usepackage{wrapfig}
\usepackage{tikz-cd}

\usepackage{tikz}
\usepackage{enumitem}

\newcommand{\N}{\mathbb{N}}

\newcommand{\dbarm}{\dbar_{\mathcal{M}}}
\newcommand{\fbarm}{\fbar_{\mathcal{M}}}

\newcommand{\h}{\bar{H}}
\newcommand{\sX}{\mathscr{X}}
\usepackage{thmtools}


\theoremstyle{plain}

\newtheorem{theorem}{Theorem}[section]
\newtheorem{lemma}[theorem]{Lemma}
\newtheorem{proposition}[theorem]{Proposition}

\newtheorem{rem}[theorem]{Remark}

\theoremstyle{definition}

\renewcommand{\subset}{\subseteq}

\newcommand{\blank}{\star}

\usepackage{array}       

\usepackage{fancyhdr}          
\usepackage{multicol}
\usepackage{lastpage}

\DeclareMathOperator{\Part}{Part}

\newcommand{\bbS}{\ensuremath{\mathbb{S}}}

\newcommand{\Z}{\ensuremath{\mathbb{Z}}}

\newcommand{\cP}{\mathcal{P}}
\newcommand{\cQ}{\mathcal{Q}}
\newcommand{\cE}{\mathcal{E}}
\newcommand{\cR}{\mathcal{R}}

\def \f {\bar f}

\def \und {\underline}
\def \N {\mathbb N}

\def \Z {\mathbb Z}

\def \eps {\varepsilon}

\newcommand{\dbar}{\bar d}
\newcommand{\fbar}{\bar f}
\newcommand{\X}{\mathbf{X}}

\DeclareMathOperator{\HSh}{\eta}

\renewcommand{\phi}{\varphi}


\title{Uniform continuity of entropy rate with respect to the $\fbar$-pseudometric}




\usepackage{blindtext,graphicx}
\usepackage[absolute]{textpos}
\setlength{\TPHorizModule}{1cm}
\setlength{\TPVertModule}{1cm}

 \begin{document}
 	\author{Tomasz Downarowicz
 		\and
 		Dominik Kwietniak
 		\and
 		Martha Łącka
 	}
 	
 	\newcommand{\Addresses}{{
 			\bigskip
 			\footnotesize
 			
 			T.~Downarowicz, \textsc{Faculty of Mathematics, Wroc{\l}aw University of Technology, Wybrze\.ze Wyspia\'nskiego 27,
 				50-370 Wroc{\l}aw, Poland}\par\nopagebreak
 			\textit{E-mail address}: T.~Downarowicz: \texttt{downar@pwr.wroc.pl}
 			\par\nopagebreak
 			\textit{Webpage}:  \url{http://prac.im.pwr.wroc.pl/~downar/}
 			
 			\medskip
 			
 			D.~Kwietniak, \textsc{Faculty of Mathematics and Computer Science, Jagiellonian University in Krakow, ul. \L o\-jasiewicza 6, 30-348 Krak\'ow, Poland}\par\nopagebreak
 			\textit{E-mail address}: D.~Kwietniak: \texttt{dominik.kwietniak@uj.edu.pl}
 				\par\nopagebreak
 				\textit{Webpage}:  \url{www.im.uj.edu.pl/DominikKwietniak/}
 				
 			\medskip
 			
 			M.~Łącka, \textsc{Faculty of Mathematics and Computer Science, Jagiellonian University in Krakow, ul. \L o\-jasiewicza 6, 30-348 Krak\'ow, Poland}\par\nopagebreak
 			\textit{E-mail address}: M.~Łącka: \texttt{martha.ubik@uj.edu.pl}	\par\nopagebreak
 			\textit{Webpage}:  \url{www2.im.uj.edu.pl/MarthaLacka/}
 			
 		}}
 		
 	\begin{textblock}{15}(1,1)
 		\noindent Accepted to \emph{IEEE Transactions on Information Theory, DOI: 10.1109/TIT.2021.3111831}
 	\end{textblock}
 	
\maketitle
\begin{abstract}
Assume that a sequence $x=x_0x_1\ldots$ is frequency-typical for a finite-valued stationary stochastic process $\X$.
We prove that the function associating to $x$ the entropy-rate $\h(\X)$ of $\X$ is uniformly continuous when one endows
the set of all frequency-typical sequences with the $\fbar$ pseudometric. As a consequence, we obtain the same result for the $\dbar$ pseudometric.
We also give an alternative proof
of the Abramov formula for the Kolmogorov-Sinai entropy of the induced measure-preserving transformation.
\end{abstract}
\section{Introduction}

Assume that  $\Lambda$ stands for a finite set and we are given two $\Lambda$-valued stationary stochastic processes, $\X=(X_i)_{i=0}^\infty$ and $\mathbf{Y}=(Y_i)_{i=0}^\infty$. Let $x=(x_i)_{i=0}^\infty$ and $y=(y_i)_{i=0}^\infty$ be frequency-typical realisations (samples) of, respectively $\X$ and $\mathbf Y$. Under what conditions on $x$ and $y$ can we conclude that the entropy rates $\h(\X)$ and $\h(\mathbf{Y})$
are close?

Note that the above question focuses on properties of individual frequency-typical trajectories to determine some global characteristics of stationary processes. This point of view was popularised by Shields \cite{Shields} and Weiss \cite{Weiss}, who presented central issues in ergodic theory and information theory in that ``sample path'' spirit. Here, we are interested in measurements of distortion determining a pseudometric such that endowing the space of all frequency-typical sequences with that pseudometric turns the entropy rate of the generated processes into a continuous function. Our problem is motivated by recent results in the dynamical systems theory, where the following construction obtains specific invariant measures: In the first step, one finds a  sequence of frequency-typical orbits converging in an appropriate sense. In the second step, one demonstrates that the limit of the approximating sequence is a frequency-typical orbit generating the sought measure. The question, whether the Kolmogorov-Sinai entropy of the limiting measure is the limit of entropies of measures generated by frequency-typical orbits in the approximating sequence reduces to the question stated in the first paragraph of our paper. By the nature of the construction, we work with concrete realisations (individual samples of the random processes) and pseudometrics $\dbar$ and $\fbar$ described below. We stress that we are looking for results valid for \emph{any} frequency-typical sample, while the so far existing results consider \emph{almost all} samples and often assume also ergodicity of the processes. 
We find the information-theoretic formulation a natural one for our problem.

By the \emph{entropy rate} of a stationary $\Lambda$-valued process $\mathbf{Z}=(Z_i)_{i=0}^\infty$  we mean
\begin{equation}\label{eq:process-entropy}
\h(\mathbf{Z})=\lim_{n\to\infty}\frac{\sum_{\lambda_1\ldots\lambda_{n}\in\Lambda^n}\HSh\left(\mu\left(Z_0=\lambda_1,\ldots,Z_{n-1}=\lambda_{n}\right)\right)}{n},
\end{equation}
where $\HSh$ stands for the entropy function given by $\HSh(0)=0$ and $\HSh(t)=-t\log t$ for $t>0$. For more details (in particular, for the justification that the limit exists) see \cite[Lemma 3.8]{Gray} or \cite[Sec. I.6.b]{Shields} or \cite{Downarowicz}.

To measure the distortion between sequences $x=(x_i)_{i=0}^\infty$ and $y=(y_i)_{i=0}^\infty$ over a common alphabet $\Lambda$ we first introduce a \emph{fidelity criterion}, that is a sequence $(\rho_n)_{n=1}^\infty$ of \emph{distortion measures} with $\rho_n$ defined on $\Lambda^n\times\Lambda^n$ for $n=1,2,\ldots$ and then take the limit superior as $n \to\infty$ of \emph{average (per-symbol) distortion} obtaining
\[
\bar{\rho}(x,y)=\limsup_{n\to\infty}\frac1n\rho_n(x_0x_1\ldots x_{n-1},y_0y_1\ldots y_{n-1}).
\]
This idea goes back to Shannon \cite{Shannon1, Shannon2}, see also \cite[Sec. 5.2, p. 120]{Gray}.

The simplest and most common example of the fidelity criterion  is based on a distortion measure known as the \emph{(additive) Hamming distance} given for $n\in\mathbb{N}$ and words $x_0x_1\ldots x_{n-1}$, $y_0y_1\ldots y_{n-1}$ in $\Lambda^n$ by
\[
d_n(x_0x_1\ldots x_{n-1},y_0y_1\ldots y_{n-1})=\left|\{0\le j< n: x_j\neq y_j\}\right|,
\]
which is the number of coordinates in which the sequences $x_0x_1\ldots x_{n-1}$ and $y_0y_1\ldots y_{n-1}$ differ. Note that the Hamming distance is  indeed  additive in the following sense: for every $n,m\in\mathbb{N}$ and $x_1x_2\ldots x_{n+m}$, $y_1y_2\ldots y_{n+m}$ in $\Lambda^{n+m}$ it holds
\begin{equation}\label{dbar-add}
  d_{n+m}((x_i)_{i=1}^{n+m},(y_i)_{i=1}^{n+m})= d_n((x_i)_{i=1}^{n},(y_i)_{i=1}^{n})+d_m((x_i)_{i=n+1}^{n+m},(y_i)_{i=n+1}^{n+m}).
\end{equation}

A less known fidelity criterion is based on a distortion measure provided by the sequence $(f_n)_{n=1}^\infty$ of $f$-distances (cf. \cite[p. 94]{Kalikow}). For $n\in\mathbb{N}$ the \emph{$f_n$-distance} between sequences $x_0x_1\ldots x_{n-1}$, $y_0y_1\ldots y_{n-1}$ in $\Lambda^n$  is simply the number of letters one must remove from each sequence so that the remaining words match, that is,
\[f_n(x_0x_1\ldots x_{n-1},y_0y_1\ldots y_{n-1})=n-k,\]
where $k$ is the largest integer such that for some $0\le i(1)<i(2)<\ldots<i(k)<n$ and $0\le j(1)<j(2)<\ldots<j(k)<n$ it holds $x_{i(s)}=y_{j(s)}$ for $s=1,\ldots,k$. The $f$-distance is sometimes called an \emph{edit distance}, since it depends on the number of edits (character deletions) that have to be performed in order to obtain matching sequences. Note that other functions are also known as \emph{edit distances} for example the \emph{Levenshtein distance}. The family of $f$-distances lacks the additivity property \eqref{dbar-add} which, vaguely speaking, is a source of difficulties when working with $\fbar$.

The $\dbar$ ($d$-bar) \emph{pseudometric} between sequences $x=(x_i)_{i=0}^\infty$ and $y=(y_i)_{i=0}^\infty$ in $\Lambda^{\mathbb{N}_0}$ (see also \cite{GNS, Ornstein}) is given by
\begin{equation}\label{def:dn}
         \dbar(x, y)=\limsup_{n\to\infty} \dbar_n(x_0x_1\ldots x_{n-1},y_0y_1\ldots y_{n-1}),
       \end{equation}
where $\dbar_n$ is  the \emph{average} (or \emph{per-letter}) \emph{Hamming distance} given by
\[
\dbar_n(x_0x_1\ldots x_{n-1},y_0y_1\ldots y_{n-1})=\frac1n d_n(x_0x_1\ldots x_{n-1},y_0y_1\ldots y_{n-1}).
\]
Note that $\dbar$ also appears in \cite[Section 5.3, p. 121]{Gray} under the name of \emph{sequence distortion}, where it is denoted by $\rho_\infty$.

Similarly, replacing $\dbar_n$ in \eqref{def:dn} by the \emph{average} (or \emph{per-letter}) \emph{$f$-distance}   $\fbar_n=\frac1n f_n$
we obtain the \emph{$\fbar$  ($f$-bar) pseudometric} on $\Lambda^{\mathbb{N}_0}$ defined (see also \cite[p. 92]{Kalikow}) for sequences $x=(x_i)_{i=0}^\infty$ and $y=(y_i)_{i=0}^\infty$ in $\Lambda^{\mathbb{N}_0}$ as
\begin{equation}\label{eq:def-fbar}
\fbar(x,y) =\limsup_{n\to\infty}\fbar_n(x_0x_1\ldots x_{n-1},y_0y_1\ldots y_{n-1}).
  \end{equation}
Clearly, for every pair of sequences $x=(x_i)_{i=0}^\infty$ and $y=(y_i)_{i=0}^\infty$ in $\Lambda^{\mathbb{N}_0}$ we have
\begin{equation}\label{ineq:fbar-le-dbar}
\fbar(x,y)\le \dbar(x,y).
\end{equation}
It is also easy to see that the pseudometrics given in \eqref{def:dn}  and \eqref{eq:def-fbar} are not equivalent, because taking $\Lambda=\{0,1\}$ and $x=(01)^\infty$, $y=(10)^\infty$ it holds that $\dbar(x,y)=1$, while $\fbar(x,y)=0$.

The pseudometrics $\dbar$ and $\fbar$ can be seen as sample sequence versions of the metrics between random processes, which unfortunately are also denoted by $\dbar$ and $\fbar$ (see \cite[Thm. 5.1]{Gray}, \cite[Def. 334, Def. 454]{Kalikow}, \cite[Def. 2.4]{ORW}, \cite[Def. 7.3]{Rudolph}, \cite[p. 92]{Shields}). In order to resolve this notational conflict, we will henceforth denote these distances between processes as $\dbarm$ and $\fbarm$.
The definition of $\dbarm$ is a variant of the construction of the Kantorovich (or the Kantorovich-Rubinstein, or the Wasserstein vel Vasershtein) optimal transport metric between two processes, where available transportation plans are shift-invariant (stationary) joinings of the processes (see \cite{Rudolph, Vershik, Villani}). Ornstein's $\dbarm$ metric plays a prominent role in the study of classification problem of Bernoulli processes \cite{Rudolph, Shields} and it is vital for information theory (Shannon coding theorems for stationary codes, universal coding, classifying noisy channels), see \cite{Gray}. The $\fbarm$ metric was introduced by Katok \cite{Katok} and Feldman \cite{Feldman}, and developed in a work of Ornstein, Rudolph and Weiss \cite{ORW}. It is crucial for the theory of Kakutani equivalence for Bernoulli and Kronecker systems \cite{ORW}. As mentioned above, our interest in the pseudometrics $\fbar$ and $\dbar$ on $\Lambda^{\mathbb{N}_0}$ comes from the fact that $\dbar$ and $\fbar$, as well as their topological counterparts (known as the Besicovitch and Feldman-Katok pseudometrics), proved to be very useful in constructions and exploration of stationary processes and invariant measures for continuous maps on compact metric spaces (see \cite{KL, KLO} and references therein).

Our main result states that the function which takes a frequency-typical sequence $z=(z_i)_{i=0}^\infty$  and associates to $z$ the entropy rate $\h(\mathbf Z)$ of the stationary process $\mathbf Z=(Z_i)_{i=0}^\infty$ generated by $z$ turns out to be uniformly continuous when we endow the set 
of all frequency-typical sequences in $\Lambda^{\mathbb{N}_0}$ with the pseudometric $\fbar$.

\begin{theorem}\label{thm:main-fbar}
For every finite alphabet $\Lambda$ and $\eps>0$ there is $\delta>0$ such that if $\X$ and $\X'$ are $\Lambda$-valued stationary processes and there exist frequency-typical sample sequences $x$ of $\X$ and $x'$ of $\X'$ satisfying $\fbar(x,x')<\delta$, then $|\h(\X)-\h(\X')|<\eps$.
\end{theorem}
As a corollary of the inequality \eqref{ineq:fbar-le-dbar} we immediately obtain an analogous result for $\dbar$.
\begin{theorem}\label{thm:main-dbar}
For every finite alphabet $\Lambda$ and $\eps>0$ there is $\delta>0$ such that if $\X$ and $\X'$ are $\Lambda$-valued stationary processes and there exist frequency-typical sample sequences $x$ of $\X$ and $x'$ of $\X'$ satisfying $\dbar(x,x')<\delta$, then $|\h(\X)-\h(\X')|<\eps$.
\end{theorem}

We were unable to find Theorem \ref{thm:main-fbar} in the present form in the literature, that is, without any assumptions on the ergodicity of the processes and assuming only the existence of a pair of $\fbar$-close frequency-typical sequences. Note that the continuity of the entropy-rate function as the function on the space of stationary processes endowed with $\dbarm$ is well-known, see \cite[Corollary 6.1]{Gray}, \cite[Theorem 385]{Kalikow}, or \cite[Thm. 7.9]{Rudolph} (although often stated only for ergodic processes, omitting uniform continuity as in \cite[Thm. I.9.16]{Shields}). The analogous statement for $\fbarm$ is known only for ergodic processes, see \cite[Theorem 455]{Kalikow} or \cite[Prop. 3.4]{ORW} and both sources use Abramov's formula for the Kolmogorov-Sinai entropy of induced transformation. To obtain Theorem \ref{thm:main-fbar} or Theorem \ref{thm:main-dbar} from the existing results about $\dbarm$ or $\fbarm$, one has to show that for every $\eps>0$ there is $\delta>0$ such that the existence of two frequency-typical sequences that are $\delta$ apart with respect to $\fbar$ (respectively, $\dbar$) on $\Lambda^{\mathbb{N}_0}$ implies that the corresponding processes are $\eps$ apart with respect to $\fbarm$ (respectively, $\dbarm$) metric on the space of processes. This is known for $\dbar$ and $\dbarm$, see  \cite[Theorem I.9.10]{Shields}, even without ergodicity \cite[Thm. 7.10]{Rudolph}, but known only for ergodic processes for $\fbar$ and $\fbarm$ (see \cite[Prop. 2.6 \& 2.7]{ORW}). Furthermore, the proof in \cite{ORW} uses the Shannon-McMillan-Breiman theorem for ergodic measures in a crucial way,  hence works only for ergodic processes.

In contrast, our demonstration of Theorem \ref{thm:main-fbar} works for not necessarily ergodic processes, contains the $\dbar$ case as a particular case,  and it is more direct even for $\dbar$  (it does not involve the use of auxiliary metrics $\dbarm$ or  $\fbarm$ on processes, does not require the Shannon-McMillian-Breiman theorem, nor the Abramov formula and conditional expectations). We use only the elementary properties of entropy. Our proof also immediately implies uniform continuity of the entropy rate function when the space of processes is endowed with the metric $\dbarm$. Again, we do not have to assume  ergodicity.

\begin{theorem}\label{thm:measure-dbar}
For every finite alphabet  $\Lambda$ and $\eps>0$ there is $\delta>0$ such that if $\X$ and $\X'$ are $\Lambda$-valued stationary processes and $\dbarm(\X,\X')<\delta$, then $|\h(\X)-\h(\X')|<\eps$.
\end{theorem}

This holds because for any two processes $X$ and $Y$ we can always find realisations $x$ and $x'$ such that $\dbarm(\X,\X')=\dbar(x,x')$ (this is easy for ergodic processes, see \cite{Shields}, for not necessarily ergodic processes it follows from \cite[Thm. 2.10]{CDS} and joining characterisation of $\dbarm$). Similarly, we obtain a new proof of the uniform continuity of the entropy rate function when the space of ergodic processes is endowed with the metric $\fbarm$. We have to restrict to ergodic processes, because the existence of sample sequences $x$ and $x'$ such that $\fbar(x,x')\le \fbarm(\X,\X')$ is known only for ergodic processes, see \cite[Prop. 2.5]{ORW}.

\begin{theorem}\label{thm:measure-fbar}
For every finite alphabet $\Lambda$ and $\eps>0$ there is $\delta>0$ such that if $\X$ and $\X'$ are ergodic $\Lambda$-valued stationary processes and $\fbarm(\X,\X')<\delta$, then $|\h(\X)-\h(\X')|<\eps$.
\end{theorem}

As a by-product of our approach, we obtain a new proof of the Abramov formula for the entropy of the induced transformation in general, not necessarily ergodic case (see Theorem \ref{thm:Abramov}). The result in such a generality (attributed to Scheller in \cite{Krengel}) is usually presented in the literature with an additional ergodicity assumption. Our demonstration requires only basic properties of the entropy conditioned on a countable partition. The usual proof uses conditional expectation and conditioning on $\sigma$-algebras. 

\section{Basic Facts and Notation}

\subsubsection*{Partitions and Names}
A \emph{measure preserving system} is a quadruple $(X,\mathscr{X},\mu,T)$, where $(X,\mathscr{X},\mu)$ is a standard probability space and $T\colon X\to X$ preserves $\mu$.
Let 
$\cP=\{P_\alpha:\alpha\in \Lambda\}$ be a measurable partition   of $X$
with $\Lambda\subset \mathbb{N}_0$. For $\alpha\in \Lambda$  we write $[\alpha]$  to denote $P_\alpha$ and refer to it as a \emph{cell} of the partition $\cP$.
The \emph{join} of two partitions  $\cP$, $\cQ$  of $X$ is the partition  $\cP\vee\cQ=\{[\alpha]\cap[\beta]:[\alpha]\in  \cP,\ [\beta]\in\cQ \}$. Since $\vee$ is associative, we can define the join of any finite collection of partitions (cf. \eqref{eq:PS}). We write $\cQ\succcurlyeq\cP$ if foe every $Q\in\cQ$ there is $P\in\cP$ such that $Q\subseteq P$. The \emph{partition distance} \cite{Gray} between $\cP$ and $\cQ$ is defined by
\[
|\cP-\cQ|=\sum_{\alpha\in\N_0} \mu(P_\alpha\textstyle{\mathsmaller{\triangle}} Q_\alpha)
\]
(we extend the alphabets if necessary by adding empty cells).
The \emph{(full) $\cP$-name} of $x\in X$ is a $\Lambda$-valued sequence $(x_n)_{n\in\bbS}$ such that for every $n\in\bbS$ we have that $x_n=\alpha$ if, and only if, $T^n(x)\in P_\alpha$. Given $S\subset\mathbb{R}$ with $S\cap\bbS$ finite, where  $\bbS=\Z$ if $T$ is invertible, and $\bbS=\N_0$ otherwise, we define
\begin{equation}\label{eq:PS}
\cP^S=\bigvee_{j\in S\cap\bbS}T^{-j}\cP=\left\{
\bigcap_{s\in S\cap\bbS} T^{-s}([\alpha_s]):(\alpha_s)_{s\in S\cap\bbS}\subseteq\cP
\right\}.\end{equation}
For $n\in\N$ we denote by $\cP^n$ the partition $\cP^{[0,\,n)}=\cP^{\{0,1,\ldots,n-1\}}$.
Note that $\cP^1=\cP$. Cells of $\cP^n$ correspond to finite $\Lambda$-valued strings of length $n$, hence for $n\ge 1$ and $\alpha_0,\alpha_1,\ldots,\alpha_{n-1}\in \Lambda$ we write
\[
[\alpha_0\alpha_1\ldots\alpha_{n-1}]=[\alpha_0]\cap T^{-1}([\alpha_1])\ldots\cap T^{-n+1}([\alpha_{n-1}])\in\cP^n.
\]
Similarly, the cells of $\cP^{[1,n]}$ (for $n\ge 1$) consist of points sharing $\cP$-name for entries from $1$ to $n$, hence we denote them as
\[
[\blank\alpha_1\ldots\alpha_{n}]=\{x\in X:T^j(x)\in[\alpha_j]\text{ for $j=1,\ldots,n$}\},\text{ where $\alpha_1,\ldots,\alpha_{n}\in \Lambda$}.
\]
We have used ``$\blank$'' to stress that we do not know which symbol appears at the $0$ coordinate in the $\cP$-name of a point from a cell of $\cP^{[1,n]}$. 
We clearly have
\[
[\alpha_0]\cap [\blank\alpha_1\ldots\alpha_{n}]=[\alpha_0\alpha_1\ldots\alpha_{n}]\text{ for all $\alpha_0,\alpha_1,\ldots,\alpha_{n}\in \Lambda$}.
\]
We will also consider partitions of $X$ according to the entries in the $\cP$-names of points over blocks of varying length.
Assume that $\xi\colon X\to\N$ is a measurable function with $\int_X\xi\,\text{d}\mu<\infty$, and $\cP$ is a finite partition of $X$. We define $\cP^{[1,\xi]}$ to be the partition obtained as follows. First, we partition $X$ into level sets of $\xi$, that is we take $\Xi=\{\xi^{-1}(n): n\in\N\}$. Second, for every $n\ge 1$ we further partition the set $\xi^{-1}(n)$ of $\Xi$ according to $\cP^{[1,n]}$. Each cell of $\cP^{[1,\xi]}$ gathers points sharing the $\cP$-name from time $1$ to $n$ where $n$ is the common value of $\xi$ for all these points. 
That is,
\[
\cP^{[1,\xi]}=
\bigcup_{n=1}^\infty \{P\cap \xi^{-1}(n):P\in \cP^{[1,n]}\}.
\]
Equivalently, for $x\in X$ the cell of $\cP^{[1,\xi]}$ containing $x$ coincides with the cell of $\cP^{[1,\xi(x)]}$ containing $x$. We can extend this notation in an obvious way, and define 
$\cP^{(-\xi,0]}$. 

\subsubsection*{Entropy and Conditional Entropy of a Partition}
Let $(X,\sX,\mu)$ be a probability space and $\cP$, $\cQ$, and $\cR$ be countable measurable partitions of $X$.
The \emph{entropy} of $\cP$ is
\[H_{\mu}(\cP)=-\sum_{P\in\cP}\mu(P)\log\mu(P).\]
The \emph{conditional entropy} of $\cP$ given $\cQ$ is defined by
\[
H_\mu(\cP|\cQ) = \sum_{Q\in\cQ}\mu(Q) H_{\mu_Q}(\cP),
\]
where $\mu_Q$ is the conditional probability measure on $Q$ (that is the measure obtained by restricting $\mu$ to $Q$ and normalizing it). Clearly, $H_\mu(\cP)=H_\mu(\cP|\{X\})$. We note the following monotonicity properties of the entropy (see \cite[Lemma I.6.6]{Shields}):
\begin{gather}\label{ent-monotone-1}
  \cP\succcurlyeq\cQ\implies H_\mu(\cQ|\cR)\le H_\mu(\cP|\cR), \\
  \label{ent-monotone-2}
  \cP\succcurlyeq\cQ\implies H_\mu(\cR|\cP)\le H_\mu(\cR|\cQ).
\end{gather}

\subsubsection*{Stationary Processes and Measure Preserving Systems}
By a ($\Lambda$-valued) \emph{random variable} we mean a measurable function from a standard probability space $(\Omega,\mathscr{B},\nu)$ to a set 
$\Lambda\subset \N_0=\{0,1,2,\ldots\}$ endowed with the power set $\sigma$-algebra $\mathscr{P}(\Lambda)$. We also refer to $\Lambda$ as to an \emph{alphabet}. A  $\Lambda$-valued \emph{process} is a sequence of $\Lambda$-valued random variables $\X=(X_i)_{i\in\bbS}$, where $\bbS=\Z$ or $\bbS=\N_0$ is an index
set, such that the domain of each $X_i$ is a common standard probability space $(\Omega,\mathscr{B},\nu)$.  The process $\X$ is \emph{stationary} if for every
$n\ge 1$, every $\lambda_1,\ldots,\lambda_n\in\Lambda$, and any $s\in\bbS$ we have \begin{multline*}
  \nu\left(\left\{\omega\in\Omega:X_{j-1}(\omega)=\lambda_j\text{ for }j=1,\ldots,n\right\}\right)= \\
  = \nu\left(\left\{\omega\in\Omega:X_{s+j-1}(\omega)=\lambda_j\text{ for }j=1,\ldots,n\right\}\right).
\end{multline*}
We call $\nu$ the \emph{law} of $X$.

Processes and probability preserving systems are closely connected, see \cite[Sec. I.2]{Shields} or \cite[Sec. 1.3]{Gray}. We briefly recall that connection.

Given a $\Lambda$-valued stationary process $\X=(X_i)_{i\in\bbS}$, 
$n\ge 1$, $\lambda_1,\ldots,\lambda_n\in\Lambda$, and $t(1),\ldots,t(n)\in\bbS$ we set 
\begin{multline}\label{eq:mu}
\mu\left(\left\{x\in\Lambda^\bbS:x_{t(j)}=\lambda_j\text{ for }j=1,\ldots,n\right\}\right) =   \\
     = \nu\left(\left\{\omega\in\Omega:X_{t(j)}(\omega)=\lambda_j\text{ for }j=1,\ldots,n\right\}\right),
\end{multline}
where $(\Omega,\mathscr{B},\nu)$ is the standard probability space on which all $X_i$'s are defined. We easily see that $\mu$ extends to a probability measure on a $\mu$-completion $\mathscr{C}$ of the product $\sigma$-algebra on $\Lambda^\bbS$. Furthermore, since $\X$ is stationary, we conclude that $\mu$ is invariant for the shift transformation $\sigma\colon\Lambda^\bbS\to\Lambda^\bbS$ given for $x=(x_i)_{i\in\bbS}$ by $\sigma(x)_i=x_{i+1}$ for every $i\in\bbS$. It follows that $(\Lambda^\bbS,\mathscr{C},\mu,\sigma)$ is a probability preserving system, called a \emph{shift system}. Furthermore, $\sigma$ is invertible provided $\bbS=\mathbb{Z}$.

On the other hand, assume we have a measure preserving system $(X,\mathscr{X},\mu,T)$ and a measurable partition  $\cP=\{P_\alpha:\alpha\in \Lambda\}$ of $X$
where $\Lambda\subset \mathbb{N}_0$. 
We tacitly ignore $\mu$-null cells.  
Given $x\in X$ we write $\cP(x)=\alpha$ if, and only if, $x\in [\alpha]$. We construct a process $(T,\cP)$ by defining random variables $X_i=\cP\circ T^i$ for each $i\in \bbS$, where  $\bbS=\Z$ if $T$ is invertible, and $\bbS=\N_0$ otherwise. Then all $X_i$'s are $\Lambda$-valued random variables defined over the standard probability space $(X,\mathscr{X},\mu)$, and the sequence $\X=(X_i)_{i\in\bbS}$ is a stationary process. The entropy rate of the process $(T,\cP)$ is denoted  by $\h_\mu(T,\cP)$, that is $\h_\mu(T,\cP)=\h(\X)$, where 
$\h(\X)$ is given by \eqref{eq:process-entropy}. The quantity $\h_\mu(T,\cP)$ appears as \emph{(dynamical) entropy of $\cP$} in ergodic theory literature. Note that
\begin{equation}\label{eq:future}
\h_\mu(T, \cP)=\lim_{n\to\infty}\frac1n{H_\mu(\cP^n)}=\lim_{n\to\infty}H_\mu(\cP|\cP^{[1,n]}),
\end{equation}
where the first equality follows immediately from definitions and the second follows from \cite[Lemma 3.17]{Gray}.

In particular, if a measure preserving system $(X,\mathscr{X},\mu,T)$ is a shift system over the alphabet $\Lambda$, that is, $X=\Lambda^\bbS$ for some finite set $\Lambda$, $\mathscr{X}$ is the $\mu$-completion of   the product $\sigma$-algebra on $\Lambda^\bbS$, and $\mu$ is a $\sigma$-invariant probability measure on $\Lambda^\bbS$, then the associated process is obtained by taking  the partition $\mathcal{L}=\{[\lambda]:\lambda\in \Lambda\}$, where $[\lambda]=\{x\in\Lambda^\bbS: x_0=\lambda\}$ for $\lambda\in\Lambda$. The entropy rate of the process $(\sigma,\mathcal{L})$ is simply denoted as $h_\mu(\sigma)$. Note that a Cartesian product of a pair of shift systems is a shift system over an Cartesian product of their alphabets. Nevertheless, if $\zeta$ is an invariant measure of a shift system whose alphabet is a Cartesian product, we will write $h_\zeta(\sigma\times\sigma)$ for its entropy rate.
\subsubsection*{Frequency-typical sequences}
Let $(k_n)_{n=1}^\infty$ be an increasing sequence in $\mathbb{N}_0$. A $\Lambda$-valued sequence $z=(z_i)_{i\in\bbS}$ is \emph{frequency-typical along $(k_n)_{n=1}^\infty$} if   for every $m\in\mathbb{N}$ and $\lambda_1\ldots\lambda_{m}\in\Lambda^m$
the sequence
\begin{equation}\label{eq:freq-limit}
  \frac{1}{k_n}\left|\{0\le j \le k_n-m: z_j=\lambda_1,\ldots,z_{j+m-1}=\lambda_{m}\}\right| 
\end{equation}
converges as $n\to\infty$. 
We say that $z$ is \emph{frequency-typical (along $(k_n)_{n=1}^\infty$) for} or \emph{generates (along $(k_n)_{n=1}^\infty$)}
a stationary $\Lambda$-valued stochastic process $\mathbf Z=(Z_i)_{i\in\bbS}$ with the law $\nu$ if for every $m\ge 1$ and for every $\lambda_1,\ldots,\lambda_{m}\in\Lambda$ the sequence in \eqref{eq:freq-limit} converges to $\nu\left(Z_0=\lambda_1,\ldots,Z_{m-1}=\lambda_{m}\right)$ as $n\to\infty$. In that case, we also say that $z$ is \emph{frequency-typical (along $(k_n)_{n=1}^\infty$) for} or \emph{generates (along $(k_n)_{n=1}^\infty$)} the $\sigma$-invariant measure $\mu$ on $\Lambda^\bbS$ obtained from $\nu$ by \eqref{eq:mu}. Every frequency-typical sequence generates a unique stationary process and every stationary process has a frequency-typical sequence generating it. Furthermore, frequency-typical sequences have full measures for ergodic processes (for definition, see \cite[Sec. I.2.a]{Shields}) and form a null set for non-ergodic processes. We skip  ``(along $(k_n)_{n=1}^\infty$)'' whenever one can take $k_n=n$ for every $n\in\mathbb{N}$ in the above definitions. When we want to say that ``$z$ generates (along $(k_n)_{n=1}^\infty$) a $\sigma$-invariant measure $\mu$ on $\Lambda^\bbS$'' without mentioning $(k_n)_{n=1}^\infty$ explicitly, we simply say that \emph{$z$ quasi-generates $\mu$}. We will use the following observation without further reference: Given an increasing sequence $(k_n)_{n=1}^\infty$ in $\mathbb{N}_0$ and $z\in \Lambda^\bbS$ we can always find a subsequence of $(k_n)_{n=1}^\infty$ such that $z$ is frequency-typical along that subsequence for a $\sigma$-invariant measure on $\Lambda^\bbS$.

\subsubsection*{Induced measure preserving systems}
Given a measure preserving system $(X,\sX,\mu,T)$ and $E\subset X$ with $\mu(E)>0$ we write $\sX_E$ for the trace $\sigma$-algebra on $E$, that is $\sX_E=\{A\cap E: A\in\sX\}$. The induced measure on $E$ is $\mu_E$, where $\mu_E(A)=\mu(A)/\mu(E)$ for $A\in\sX_E$. The \emph{first return time} to $E$ is the measurable function $r_E\colon E\to \bar\N=\N\cup\{\infty\}$ defined by $r_E(x)=\min\{n\ge 1: T^n(x)\in E\}$. By the Poincar\'e recurrence theorem $r_E(x)<\infty$ for $\mu$-a.e. $x\in E$. Setting $R_n=\{x\in E: r_E(x)=n\}$ for $n\in\bar{\mathbb{N}}$, we obtain the \emph{return time partition}  $\cR$ of $E$. The Kac lemma (see \cite{Saussol} for a proof in this generality) says that
\begin{equation}\label{eq:Kac}
  \int_E r_E\, \text{d}\mu=\sum_{n\in\N}n\mu_E(R_n)=\frac{\mu\left(\bigcup_{k}T^{-k}(E)\right)}{\mu(E)}.
\end{equation}
The \emph{induced map} is the map $T_E\colon E\to E$ such that $T_E(x)=T^{r_E(x)}(x)$ for $x\in E$  (we ignore $\mu$-null set of points where $T_E$ is not well-defined)).  The induced system is the measure preserving system $(E,\sX_E,\mu_E,T_E)$. We say that $E\in\sX$ \emph{sweeps out $X$} if
\begin{equation}\label{def:sweep-out}
  \mu\left(\bigcup_{n=0}^\infty T^{-n}(E)\right)=1.
\end{equation}
		Note that if $E$ is sweeps out $X$, then the trace $\sigma$-algebra $\sX_{E}$ together with $T$ contain complete information about $\sX$. In other words, for every $A\in\sX$ one can find sets $A_n\in\sX_{E}$ such that $A$ is a union (up to measure zero) of the sets $T^{-n}(A_n)$, $n\in\N_0$ (see \cite[p. 263]{DGS}).
If $\mu$ is ergodic, then every $A\in\sX$ with $\mu(A)>0$ sweeps out $X$.


\section{Main Lemmas}

In this section we state two lemmas (Propositions \ref{prop} and \ref{prop:main}) providing a backbone for the proof of our main result (Theorem \ref{thm:main-fbar}).
We postpone the proofs to the next section and the appendix.

Roughly speaking, the lemmas compare the entropy rates of processes obtained by encoding a measure preserving system $(X,\sX,\mu,T)$ and its induced first-return measure preserving system $(E,\sX_E,\mu_E,T_E)$. We choose such partitions $\cP$ of $X$ and $\cP_E$ of $E$ that knowing either the $\cP$-name of a $T$ orbit or the $\cP_E$-name of a corresponding $T_E$ orbit allows us to recover the other name. Note that the entropy rate is the expected number of symbols per unit time needed to encode an orbit in a measure preserving system by the coding algorithm induced by the transformation and the partition. More precisely, the entropy rate is the infimum of the mean number of symbols per unit time per point decoding the long-time behavior of a large batch of points chosen randomly according to the underlying measure (here, either $\mu$ or $\mu_E$). For the induced transformation, the ``unit time'' is the expected time of the next visit to $E$, that is, equals  $1/\mu(E)$. Now, two different encodings of the same orbit contain the same information so the corresponding entropy rates change inverse proportionally to the scaling of time unit (e.g. the number of megabits per minute is 60 times the number of megabits-per-second). 

From now on we assume that $(X,\mathscr{X},\mu,T)$ is an invertible probability measure preserving system. This is not a restrictive assumption, because in the measure preserving (stationary) setting the notion of natural extension allows us to transfer the results presented here to the noninvertible case.
Then $\mu$-almost every point visits $E$ for infinitely many positive and for infinitely many negative times.  We set $\cE$ to be the \emph{entry time partition of $E$}, that is, the cells of $\cE$ are $E_n=\{y\in E:r_E(T_E^{-1}(y))=n\}$ for $n\in\bar\N$. One can easily see that $\cE=T_E(\cR)$.
Furthermore,
$\mu_E(E_n)=\mu_E(R_n)$ for every $n\in\N$, so 
applying \eqref{eq:Kac} we get $H_{\mu_E}(\cE)=H_{\mu_E}(\cR)<\infty$. If $\cQ=\{Q_\alpha:\alpha\in A'\}$ is any partition of $E$ such that $\cQ\succcurlyeq \cE$, then the \emph{height} $|\alpha|$ of  $[\alpha]\in\cQ$ is $n\in \N$ such that $[\alpha]\subset E_n$.

Given a partition $\cP$ of $X$ we let $\cP_E$ stand for the partition of $E$ given by $\cP_E=\{P_\alpha\cap E:P_\alpha\in\cP\}$.
Our first lemma, Proposition \ref{prop:main} allows us to compare the entropy rate of a process determined by the induced transformation and a partition $\cP_E$ of $E$ such that $\cP_E\succcurlyeq \cE$ with a process determined by $T$ and a partition $\cP$ extending $\cP_E$ to the whole $X$ by adjoining the cell $P_0=X\setminus E$. This result was first stated in Scheller's thesis (see \cite{Krengel}, \cite[p. 257--259]{Petersen}), but our proof is new. 
Note that if  $P_0=X\setminus E\in\cP$ and $\cP_E\succcurlyeq \cE$, then knowing the $(T_E,\cP_E)$-name $(a_n)$ of $x\in E$ we determine the $(T,\cP)$-name of $x$  by inserting $|a_n|-1$ $0$'s between $a_{n-1}$ and $a_n$ for each $n\in\Z$.

\begin{proposition}\label{prop:main}
Assume that $(X,\mathscr{X},\mu,T)$ is an invertible probability measure preserving system and $E\in\sX$ sweeps out $X$. 
If $\cP$ is a  countable measurable partition of $X$ such that $H_\mu(\cP)<\infty$, $X\setminus E\in \cP$, and
$\cP_E \succcurlyeq\cE$, then $\mu(E)h_{\mu_E}(T_E,\cP_E)= h_{\mu}(T,\cP)$.
\end{proposition}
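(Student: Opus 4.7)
The heart of the argument is an explicit coding between $\cP_E$-names under $T_E$ and $\cP$-names under $T$, already sketched in the excerpt: because $X\setminus E = P_0 \in \cP$, the $\cP$-name of any $x\in X$ under $T$ marks exactly when its $T$-orbit visits $E$; and because $\cP_E\succcurlyeq\cE$, every $\cP_E$-symbol $a$ encodes its height $|a|$, i.e., the time elapsed since the previous visit. Combining these, as the excerpt already notes, from the $\cP_E$-name of $y\in E$ one recovers the $\cP$-name of $y$ under $T$ by inserting $|a_n|-1$ copies of $P_0$ between consecutive symbols. In particular, for $y\in E$ and $N\ge 1$, the $\cP_E^N$-name of $y$ under $T_E$ is informationally equivalent to the $\cP$-name of $y$ under $T$ over the window $\{0,1,\dots,v^{N-1}(y)\}$; in the variable-length-join notation of the excerpt, this becomes the partition identity
\[
\cP_E^N = \bigl(\cP^{v^{N-1}+1}\bigr)_{E}.
\]

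To derive the proposition from this identity, I plan to estimate $H_\mu(\cP^n)$ from above and below for large $n$, with $N=\lfloor n\mu(E)\rfloor$. First I would condition on the first-visit function $\tau_n(x)=\min\{t\in[0,n):\,T^tx\in E\}$ (set to $n$ on the null set where no visit occurs). Since $\tau_n$ takes at most $n+1$ values, $H_\mu(\tau_n)=O(\log n)=o(n)$. On each level set $\{\tau_n=k\}$ the remaining $\cP$-name belongs to a $T_E$-orbit in $E$ of length essentially $N$, up to an end-of-window correction whose cells refine the return-time partition $\cR$. Since $H_{\mu_E}(\cR)<\infty$ (already secured in the excerpt via Fact 1.1.4 of \cite{Downarowicz}), this correction contributes $O(1)$. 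One concludes
\[
H_\mu(\cP^n) = H_{\mu_E}\bigl(\cP_E^{N}\bigr) + o(n),
\]
and dividing by $n$ yields
\[
h_\mu(T,\cP) = \lim_{n\to\infty}\frac{H_\mu(\cP^n)}{n} = \mu(E)\lim_{N\to\infty}\frac{H_{\mu_E}(\cP_E^{N})}{N} = \mu(E)\,h_{\mu_E}(T_E,\cP_E),
\]
with Kac's lemma $\int_E r_E\,d\mu=1$ entering only through the integrated scaling $N/n\to\mu(E)$.

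The principal obstacle is non-ergodicity: neither $v^N/N$ nor $\tau_n/n$ converges pointwise, so the classical Abramov-Scheller proof handles the non-ergodic case by conditioning on the $\sigma$-algebra of $T$-invariant sets (or, equivalently, invoking the ergodic decomposition). I intend to bypass $\sigma$-algebras entirely and use only the chain rule, subadditivity, and monotonicity of conditional entropy on at most countable partitions, as the authors announce. The finiteness $H_{\mu_E}(\cR)<\infty$, already recorded in the excerpt, is what keeps the countable-partition entropies appearing in the argument finite and the boundary corrections $o(n)$, while Kac's lemma in integrated form supplies the exact scaling factor $\mu(E)$.
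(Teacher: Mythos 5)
Your coding between $\cP_E$-names under $T_E$ and $\cP$-names under $T$ is exactly the one the paper exploits, and you correctly name the central difficulty; but your outline does not contain the idea that overcomes it, and the step where you would need it fails. The claim that on each level set $\{\tau_n=k\}$ the remaining $\cP$-name over $[k,n)$ corresponds to a $T_E$-orbit segment ``of length essentially $N=\lfloor n\mu(E)\rfloor$, up to an end-of-window correction'' contributing $O(1)$ is false without ergodicity: the number of visits of $x$ to $E$ in $[0,n)$ is asymptotically $n$ times the conditional expectation of $\mathbf{1}_E$ on the invariant $\sigma$-algebra at $x$, which on a non-ergodic system differs from $n\mu(E)$ by a quantity of order $n$ on a set of positive measure (take $\mu=\tfrac12\mu_1+\tfrac12\mu_2$ with $\mu_1(E)\neq\mu_2(E)$). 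So the mismatch between the deterministic window $\cP_E^{N}$ and the actual variable-length $\cP_E$-word encoding the $\cP^n$-name is linear in $n$, not a boundary effect controlled by $H_{\mu_E}(\cR)<\infty$, and neither inequality between $H_\mu(\cP^n)$ and $H_{\mu_E}(\cP_E^{N})+o(n)$ follows from a direct coding bound. (The asymptotic identity you want to prove is equivalent to the proposition itself, so assuming it would be circular.)

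The paper's proof avoids fixing a deterministic window altogether: it works with conditional rather than block entropies and makes the window random. The key point is the \emph{exact} identity $H_\mu(\cP\mid\cP^{[1,v^n]})=\mu(E)\,H_{\mu_E}(\cP_E\mid\cP_E^{[1,n]})$, where $v^n$ is the time of the $n$-th return to $E$; the cells of $\cP^{[1,v^n]}$ contained in $E$ (the ``full'' cells) are precisely the cells of $\cP_E^{[1,n]}$ with measures scaled by $\mu(E)$, while the remaining cells lie in $X\setminus E$ and contribute zero because on them the $\cP$-symbol at time $0$ is determined. Non-ergodicity is then absorbed by a sandwich argument: $\cP^{[1,v^n]}\succcurlyeq\cP^{[1,n]}\succcurlyeq\cP^{[1,(v^N\wedge n)]}$ for $n\ge N$, combined with continuity of $H_\mu(\cP\mid\cdot)$ as $v^N\wedge n$ increases to $v^N$, identifies $\lim_n H_\mu(\cP\mid\cP^{[1,v^n]})$ with $\lim_n H_\mu(\cP\mid\cP^{[1,n]})=h_\mu(T,\cP)$. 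No pointwise behaviour of $v^n/n$ or of visit counts is ever invoked, and Kac's lemma enters only to secure $H_{\mu_E}(\cR)<\infty$. To salvage your outline you would need to replace the deterministic window $N=\lfloor n\mu(E)\rfloor$ by the random one and pass to conditional entropies, which is in effect the paper's argument.
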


Consider a finite partition $\cQ=\{Q_s:s\in\Lambda\}$ of $X$, where $\Lambda\subseteq\N$. We assume that $\cQ$ contains the information whether the orbit is in or outside $E$, that is, $\cQ\succcurlyeq\{E,X\setminus E\}$. We want to find a partition of $E$ such that the process generated by the partition and $T_E$ encodes the full information about the process $(T,\cQ)$. We achieve this by adding to  the cells of $\cQ$  contained in $E$ the information about entry times.

It is now customary to think of a cell $[\alpha]=P_\alpha\in\cQ$ contained in $E$ as represented or indexed by a \emph{starred symbols} $\alpha^*$, while the cells of $\cQ$ outside $E$ are indexed as before by $\alpha\in\Lambda$. At least one starred and at least one non-starred symbol should index a nonempty set.
Since $\mu$-almost every $x\in X$ visits $E$ infinitely many times, the $\cQ$-name $(x_n)_{n\in\mathbb{Z}}$ of $\mu$ almost every $x\in X$ can be divided into blocks $x_{(n_{k-1},n_k]}$ where $n_0$ is chosen so that $n_0$ is the time of the first visit of $x$ to $E$, that is, $n_0=\min\{j\ge 0:T^j(x)\in E\}$. Each word $x_{(n_{k-1},n_k]}$ consists of some number  (possibly zero when $E\cap T(E)\neq\emptyset$) of non-starred symbols followed by a single starred one. Now, if we consider the words $x_{(n_{k-1},n_k]}$ as symbols of a new alphabet, we obtain a countable partition $\cQ_E^{(-\xi, 0]}$ of $E$, where $\xi=r_E\circ T_E^{-1}$. Recall that cells of $\cQ_E^{(-\xi, 0]}$ are defined by	taking the \emph{entry-time} partition $\cE$ of $E$ and refining each $E_n\in\cE$ according to $\cQ^{(-n,0]}$.
We will show that $\cQ^{(-\xi,0]}_0=\cQ_E^{(-\xi,0]}\cup\{X\setminus E\}$ is a partition of $X$ satisfying the assumptions of Proposition~\ref{prop:main}, which yields $\h_\mu(T,\cQ^{(-\xi,0]}_0)=\mu(E) \h_{\mu_E}(T_E,\cQ_E^{(-\xi,0]})$. On the other hand, the processes $(T,\cQ)$ and $(T,\cQ^{(-\xi,0]}_0)$ are isomorphic by a code which turns a $\cQ^{n_k-n_{k-1}}$-name $x_{(n_{k-1},n_k]}$ into $\cQ_0^{(-\xi, 0]}$-name $0^{n_k-n_{k-1}-1}w$ with $w=x_{(n_{k-1},n_k]}$ treated as a symbol of $\cQ_E^{(-\xi, 0]}$.


	\begin{proposition}\label{prop} Assume that $(X,\mathscr{X},\mu,T)$ is an invertible probability measure preserving system and $E\in\sX$ sweeps out $X$.
		If $\cQ$ is a finite partition of $X$ such that $\cQ\succcurlyeq\{E,X\setminus E\}$, then the partition $\cQ_E^{(-\xi, 0]}$ of $E$, where $\xi=r_E\circ T_E^{-1}$
satisfies $H_{\mu_E}(\cQ_E^{(-\xi, 0]})<\infty$ and $\mu(E) \h_{\mu_E}(T_E,\cQ_E^{(-\xi, 0]})=\h_\mu(T,\cQ)$.
	\end{proposition}

	\section{Proofs}
	
	\begin{proof}[Proof of Proposition~\ref{prop:main}]
For $x\in X$ and $N\in\mathbb{N}$ we inductively define an auxiliary function we call the \emph{time of the $N$-th return to $E$} and denote $v^N(x)$.
Let $v^1(x)=\min\{n\ge 1: T^n(x)\in E\}$ (we agree that $\min\emptyset=\infty$). Given $N>1$ and $v^{N-1}(x)\neq\infty$, we set
$v^N(x)=\min\{n> v^{N-1}(x): T^n(x)\in E\}$. By convention, we do not define $v^N(x)$ if $v^M(x)=\infty$ for some $M<N$.
Note that by \eqref{def:sweep-out} for $\mu$-almost every $x\in X$ we  have that for each $N\in\mathbb{N}$ it holds $N\le v^N(x)<\infty$. Furthermore, $v^1$ coincides with $r_E$ on $E$.

Fix $N\in\N$. For $n\ge N$ we have $\cP^{[1,v^n]}\succcurlyeq\cP^{[1,n]}\succcurlyeq\cP^{[1,(v^N\wedge n)]}$, where 
$(v^N\wedge n)(x)=\min\{v^N(x),n\}$ for $x\in X$.
		Using \eqref{ent-monotone-2} we obtain
		\begin{equation}\label{ineq:cond}
		H_{\mu}(\cP|\cP^{[1,v^n]})\le H_{\mu}(\cP|\cP^{[1,n]})\le H_{\mu}(\cP|\cP^{[1,(v^N\wedge n)]}).
		\end{equation}
		
		Note that $\cP^{[1,v^N]}$ and $\cP^{[1,(v^N\wedge n)]}$ coincide outside the set $\{x\in X: v^N(x)>n\}$, whose $\mu$-measure goes to $0$ as $n\to \infty$. It follows that the partition distance  $|\cP^{[1,v^N]}-\cP^{[1,(v^N\wedge n)]}|$ approaches $0$ as $n\to\infty$. 
		By \cite[Fact 1.7.10]{Downarowicz}, $H_\mu(\cP|\cdot)$ is continuous on the space of countable partitions endowed with the partition distance, so
		\begin{equation}\label{eq:Dow1710}
		\lim_{n\to\infty}H_{\mu}(\cP|\cP^{[1,(v^N\wedge n)]})=H_{\mu}(\cP|\cP^{[1,v^N]}).
		\end{equation}
		Let $n\to\infty$ in \eqref{ineq:cond}, then we use \eqref{eq:Dow1710}, and finally we let $N\to \infty$ to get
		\begin{equation}
		\label{eq:first-half-of-the-proof}
		\lim_{n\to\infty} H_{\mu}(\cP|\cP^{[1,v^n]})= \lim_{n\to\infty} H_{\mu}(\cP|\cP^{[1,n]}).
		\end{equation}
		By the definition of the conditional entropy we have
		\begin{equation}\label{eq:cond-ent}
		H_{\mu}(\cP|\cP^{[1,v^n]})= \sum_{W\in \cP^{[1,v^n]}}\mu(W)\sum_{A\in\cP}\HSh(\mu(A\cap W)/ \mu(W)).
		\end{equation}
		Below, we label the cells of $\cP_E\subset\cP$ with boldface letters  (recall that $[0]=X\setminus E\in \cP$). Note that $\cP_E\succcurlyeq \cE$, so if $[\mathbf{a}]\in\cP_E$, then every occurrence of $\mathbf{a}$ in the $\cP$-name 
		is preceded by $0^{|\mathbf{a}|-1}$.   
		We write $\mathbf{\bar{0}a}$ for blocks of ``correct'' number of $0$'s followed by $\mathbf{a}$, that is, $\mathbf{\bar{0}a}$ stands for $0^{|\mathbf{a}|-1}\mathbf{a}$. Cells of $\cP^{[1,v^n]}$ consist of points sharing $\cP$-names from position $1$ to the place where the $n$-th bold symbol occurs. In particular, if $W\in\cP^{[1,v^n]}$ then 
		\begin{equation}\label{eq:W}
		W=[\blank 0^{\ell}\mathbf{a}_1 {\mathbf{\bar{0}}}\mathbf{a}_2\ldots{\mathbf{\bar{0}}}\mathbf{a}_n]\in\cP^{[1,\ell+|\mathbf{a}_2|+\ldots+|\mathbf{a}_n|]}, \text{ where $\ell<|\mathbf{a}_1|$}.
		\end{equation}
		We have two cases: either $\ell=|\mathbf{a}_1|-1$ or $\ell<|\mathbf{a}_1|-1$.
		In the former case, we call $W$ a \emph{full} cell and note that $W\subset E$, so $\mu([0]\cap W)=0$. In the latter case, $W\subset X\setminus E$ and $\mu([0]\cap W)=1$. 
		Let $\mathcal{F}_n$ be the set of full cells in $\cP^{[1,v^n]}$.   
		We rewrite \eqref{eq:cond-ent} as
		\begin{equation}
		\label{eq:cond-ent2}
		H_{\mu}(\cP|\cP^{[1,v^n]})=\\
		\sum_{[\blank{\mathbf{\bar{0}}}\mathbf{a}_1\!\ldots\!{\mathbf{\bar{0}}}\mathbf{a}_n]\in\mathcal{F}_n}\!
		\mu([\blank{\mathbf{\bar{0}}}\mathbf{a}_1\!\ldots\! {\mathbf{\bar{0}}}\mathbf{a}_n])\sum_{[\mathbf{a}_0]\in\cP}\!\HSh\left(\frac{\mu([\mathbf{a}_0 {\mathbf{\bar{0}}}\mathbf{a}_1\!\ldots\!{\mathbf{\bar{0}}}\mathbf{a}_n])}{ \mu([\blank{\mathbf{\bar{0}}}\mathbf{a}_1\!\ldots\!{\mathbf{\bar{0}}}\mathbf{a}_n])}\right).
		\end{equation}
		Note that a full cell $[\blank \mathbf{\bar{0}}\mathbf{a}_1\!\ldots\!\mathbf{\bar{0}}\mathbf{a}_n]\subset\cP^{[1,v^n]}$ equals $[\blank \mathbf{a}_1\! \ldots\! \mathbf{a}_N]_E\in\cP_E^{[1,n]}$, where
		$\cP_E^{[1,n]}=T_E^{-1}(\cP_E)\vee \ldots \vee T_E^{-n}(\cP_E)$ and $[\mathbf{a}_0]\cap [\blank \mathbf{\bar{0}}\mathbf{a}_1\!\ldots\!\mathbf{\bar{0}}\mathbf{a}_n] =[\mathbf{a}_0 \mathbf{a}_1\!\ldots\! \mathbf{a}_n]_E\in\cP_E^{n+1}$ for every $[\mathbf{a}_0]\in\cP_E$.
		Since $\mu(B)=\mu(E)\mu_E(B)$ for $B\subset E$ , the right hand side of \eqref{eq:cond-ent2} equals
		\[\sum_{[\!\blank\mathbf{a}_1\!\ldots\!\mathbf{a}_n]\in \cP_E^{[1,n]}}\mu([\!\blank\mathbf{a}_1\!\ldots\!\mathbf{a}_n]_E)\sum_{[\mathbf{a}_0]\in\cP_E}\!\HSh\left(\frac{\mu_E([\mathbf{a}_0 \mathbf{a}_1\!\ldots\!\mathbf{a}_n]_E)}{ \mu_E([\!\blank\mathbf{a}_1\!\ldots\!\mathbf{a}_n]_E)}\right)=\mu(E)H_{\mu_E}\!(\cP_E|\cP_E^{[1,n]}).
		\]
		Letting $n\to \infty$, invoking \eqref{eq:future} 
and \eqref{eq:first-half-of-the-proof} we obtain
		\[
		\mu(E)\h_{\mu_E}\!(T_E,\cP_E)=\mu(E)\lim_{n\to\infty}H_{\mu_E}\!(\cP_E|\cP_E^{[1,n]}) =\lim_{n\to\infty}H_{\mu}(\cP|\cP^{[1,v^n]})=\h_\mu(T,\cP),
		\]
		which completes the proof.
	\end{proof}
		\begin{proof}[Proof of Proposition~\ref{prop}]
Since every partition $\cP=\{P_1,P_2,\ldots\}$ with $\sum_n n\mu(P_n)<\infty$ has finite entropy, so $H_{\mu_E}(\cE)=H_{\mu_E}(\cR)<\infty$.
		Let $\xi=r_E\circ T_E^{-1}$.	 
Observe that for every $n$ there are at most $|\cQ|^n$ atoms of $\cQ_E^{(-\xi, 0]}$ with nonempty intersection with $E_n$, that is, atoms corresponding to $\cQ^n$-names with the last symbol starred. It follows that for every $n\ge 1$ the entropy of the partition $\cQ_E^{(-\xi, 0]}$ with respect to the measure
			$\mu_E(\cdot\cap E_n)/\mu_E(E_n)$ is at most $n\log|\cQ|$. It follows directly from the way we defined $\cQ_E^{(-\xi, 0]}$ that $\cQ_E^{(-\xi, 0]}\succcurlyeq \cE$, hence $\cQ_E^{(-\xi, 0]}\vee \cE=\cQ_E^{(-\xi, 0]}$ and
			\[
			H_{\mu_E}(\cQ_E^{(-\xi, 0]})=H_{\mu_E}(\cQ_E^{(-\xi, 0]}|\cE)+H_{\mu_E}(\cE)\le \sum_{n=1}^\infty \mu_E(E_n)n\log|\cQ|+H_{\mu_E}(\cE).
			\]
			Since both $H_{\mu_E}(\cE)$ and $\log|\cQ|\sum_{n=1}^\infty n\mu_E(E_n)$ are finite 
			we see that $H_{\mu_E}(\cQ_E^{(-\xi, 0]})<\infty$.
			By adding the set $P_0=X\setminus E$ as a cell to $\cQ_E^{(-\xi, 0]}$ we obtain a partition $\cQ_0^{(-\xi, 0]}$ of $X$ satisfying the assumptions of
			Proposition \ref{prop:main}. Applying that result we get
			\[
			\mu(E) \h_{\mu_E}(T_E,\cQ_E^{(-\xi, 0]})=\h_\mu(T,\cQ_0^{(-\xi, 0]}).
			\]
			 Recall that for $\mu$-almost every $x\in X$ there exists a strictly increasing sequence $(n_k)_{k\in\Z}$ of integers satisfying $n_{-1}< 0 \le n_0$ such that the $\cQ$-name $(x_i)_{i\in\Z}$ of $x$ can be divided into $x_{(n_{k-1},n_k]}$ for $k\in\mathbb{Z}$, where each block ends with a single starred symbol preceded by some number (possibly zero) of non-starred symbols. Therefore, given such a $\cQ$-name $x=(x_k)_{k\in\Z}$ and $j\in\mathbb{Z}$ we find  $k\in\mathbb{Z}$ such that $n_{k-1}<j\le n_k$ and we define
			\begin{equation}\label{eq:isom}
			y_j=\begin{cases}x_{(n_{k-1},n_k]},&\text{ if }j= n_k,\\
			0,&\text{ otherwise}.
			\end{cases}
			\end{equation}
			Since every block $x_{(n_{k-1},n_k]}$ corresponds to a cell of $\cQ_0^{(-\xi, 0]}$, the resulting sequence $(y_j)_{j\in\mathbb{Z}}$ is a valid $\cQ_0^{(-\xi, 0]}$-name. The transformation $(x_j)_{j\in\mathbb{Z}}\mapsto(y_j)_{j\in\mathbb{Z}}$ given by \eqref{eq:isom} is clearly an isomorphism\footnote{It is even a finitary code, that is, it has the following property: to determine a value of any entry in an output sequence, one should only examine finitely many coordinates of the source sequence, this finite number of coordinates depending upon the input sequence under consideration (see \cite{Serafin}).} of the processes $(T,\hat\cP_0^\cQ)$ and $(T,\hat\cQ)$, since given a $\cQ_0^{(-\xi, 0]}$-name, where nonzero blocks correspond to the blocks $x_{(n_{k-1},n_k]}$ in $\cQ_0^{(-\xi, 0]}$ we can easily reconstruct $\cQ$-name. Thus $\h_\mu(T,\cQ_0^{(-\xi, 0]})=\h_\mu(T,\cQ)$.
		\end{proof}

For the proof of Theorem \ref{thm:main-fbar} it will be convenient to replace $\fbar$ by a uniformly equivalent pseudometric $\hat f$.
For $u=(u_i)_{i\in\bbS},v=(v_i)_{i\in\bbS}\in\Lambda^{\bbS}$, where $\bbS=\mathbb{N}_0$ or $\bbS=\mathbb{Z}$ and strictly increasing sequences $I=(i(r))_{r\in\N}$,  $I'=(i'(r))_{r\in\N}$ in $\mathbb{N}_0$ we write $u|_{I}=w|_{I'}$ if
$u_{i(r)}=w_{i'(r)}$ for every $r\in\N$.  We define $\hat f(u,w)$ as
\begin{multline*}\hat f(u, w)=\inf\{\eps>0\,:u|_{I}=w|_{I'}\text{ for some strictly increasing sequences }\\I=(i(r))_{r\in\N},\,I'=(i'(r))_{r\in\N}\text{ in }\N_0\text{ with }\und d(I)\geq 1-\eps,\, \und d(I')\geq 1-\eps\},\end{multline*}
where $\underline d$ denotes the \emph{lower asymptotic density}, that is, for $A\subset\N_0$ we set
\[\und d(A)=\liminf\limits_{n\to\infty}\frac1n{|A\cap\{0,1,\ldots, n-1\}|}.\]


\begin{lemma}[\cite{ORW}]\label{fbarhat}
	The pseudometrics $\hat{f}$ and $\f$ are uniformly equivalent on $\Lambda^{\bbS}$.
\end{lemma}

	\begin{proof}[Proof of Theorem~\ref{thm:main-fbar}] By Lemma~\ref{fbarhat} it is enough to consider the  pseudometric $\hat f$, which is a pseudometric on $\Lambda^\bbS$ in both cases, $\bbS=\N_0$ and $\bbS=\Z$. Theorem~\ref{thm:main-fbar} follows immediately from the analogous statement for invertible processes by considering the natural extension, so from now on we assume that $\bbS=\Z$. Let $\Lambda=\{1,2,\dots,l\}$  ($\Lambda$ deliberately does not contain $0$).
		Let $x,x'\in \Lambda^{\Z}$ satisfy $\hat f(x,x')<\eps$. We also assume that $x,x'$ are frequency-typical for measures $\mu,\mu'$, respectively. We wish to prove that $|h_\mu(\sigma)-h_{\mu'}(\sigma)|\to 0$ as $\eps\to 0$.
		By assumption, there exist sets $A=\{a(1),a(2),\dots\}\subset\N_0$ and $A'=\{a'(1),a'(2),\dots\}\subset\N_0$ such that $x_{a(n)}=x'_{a'(n)}$ for each $n\in\mathbb{N}$, and both $\und d(A)$ and $\und d(A')$ are bounded below by $1-\eps$.
We define $\kappa=(\kappa_n)_{n=0}^\infty$ by $\kappa_n=x_{a(n+1)}=x'_{a'(n+1)}$ for $n\in\mathbb{N}_0$. Let $y\{0,1\}^\Z$ be the characteristic function of a set $A$. 		
			\begin{figure}[ht]
{\centering
				\includegraphics[scale=0.5]{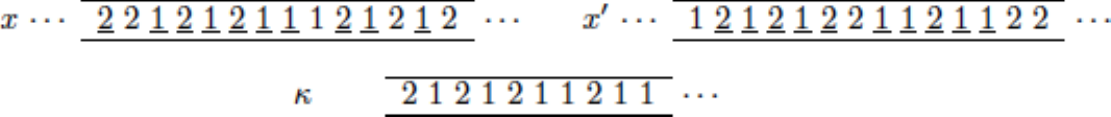}
				\caption{An example of sequences $x, x'\in\{1,2\}^{\Z}$ and their common subsequence $\kappa$. Underlined entries in $x$ and $x'$ mark the positions in $A$ and $A'$.} }
				\label{fig:2}
			\end{figure}
	
		
		

		

In the following, we will repeatedly choose $\sigma$-invariant measures quasi-generated by various finite valued sequences without specifying sequences along which these measures are generated. Each time we pass to a subsequence,  we  choose it to be a subsequence of the sequence along which the point used in the previous step quasi-generated a measure. For example, $x$ is frequency-typical, so it generates $\mu$ along the sequence consisting of all nonnegative integers, while $\xi'$ is quasi-generated along a subsequence of the sequence along which $\xi$ is generated. 
		
		
		Let $\xi$ be a $\sigma\times\sigma$-invariant measure on $\Lambda^\Z\times\{0,1\}^\Z$ quasi-generated by the pair $(x,y)$. Let $\nu$ on $\{0,1\}^\Z$ be the marginal distribution of $\xi$ on the second coordinate. Then $\xi$ is a joining of $\mu$ and $\nu$ (cf. \cite[p. 52]{Gray}) and $\nu$ is quasi-generated by $y$, that is, $\mu$ (respectively, $\nu$) is the marginal of $\xi$ with respect to the projection from $\Lambda^\Z\times\{0,1\}^\Z$ to the first (respectively, the second) coordinate. Note that the entropy rate of $\nu$ satisfies $h_{\nu}(\sigma)\le \HSh(\eps)+\HSh(1-\eps)$. 
		Thus,
		\[
		h_{\mu}(\sigma)\le h_\xi(\sigma\times\sigma)\le h_\mu(\sigma)+h_\nu(\sigma)\le h_\mu(\sigma)+\HSh(\eps)+\HSh(1-\eps).
		\]
		Therefore  $|h_{\mu}(\sigma)- h_\xi(\sigma\times\sigma)|\to 0$ as $\eps\to 0$.
			\begin{figure}[h]
{\centering
				\includegraphics[scale=0.5]{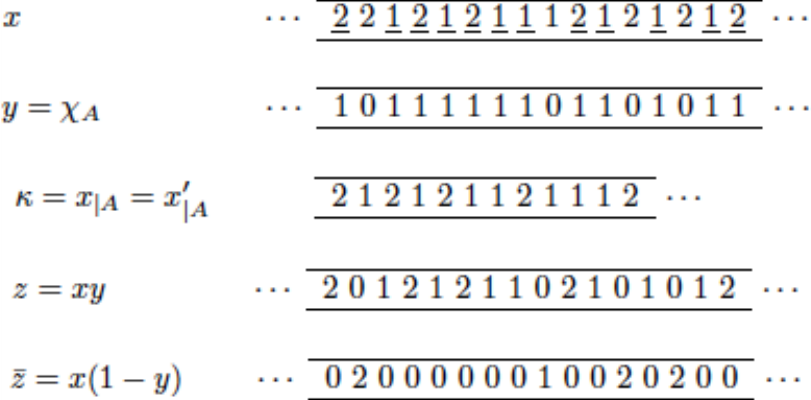}
				\caption{Sequences $y, \kappa, z, \bar z$ constructed as in the proof of Theorem \ref{thm:main-fbar} given $x, x'$ as above.}}
			\end{figure}

Let $z = xy\in(\Lambda\cup\{0\})^\Z$ be the pointwise product of $x$ and $y$ and let $\bar z = x(1-y)$. Note that $z$ (respectively, $\bar z$) coincides with $x$ along $A$ (respectively, along $A^c=\N_0\setminus A$). 
The finite sliding-block code of length $1$ given by 
$\Phi(a,b)=(ab,a(1-b))$ yields an isomorphism between the measure $\xi$ quasi-generated by $(x,y)$ and  a measure $\bar\xi$ 
quasi-generated by $(z,\bar z)$ along the same sequence as $\xi$. The marginal distributions of $\bar\xi$ are 
$\sigma$-invariant measures $\zeta$ and $\bar\zeta$ quasi-generated by $z$ and  $\bar z$, respectively. 
		Hence 
		\begin{equation}\label{eq:zeta}
		h_\zeta(\sigma)\le h_\xi(\sigma\times\sigma)=h_{\bar\xi}(\sigma\times\sigma)\le h_\zeta(\sigma)+h_{\bar\zeta}(\sigma).
		\end{equation}
		Note that $\bar\zeta$ is generated by $\bar z$ and the symbol $0$ appears in $\bar z$ with the lower asymptotic density bounded below by
$1-\eps$, so $\bar\zeta([0])\ge 1-\eps$. Similar reasoning shows that $\zeta([0])\le\eps$. It follows that $h_{\bar\zeta}(\sigma)\le \HSh(1-\eps)+\HSh(\eps)+\eps\log l$, where the right hand-side of the inequality is the entropy of the probability vector $(1-\eps,\eps/l,\eps/l,\dots,\eps/l)$, with $l=|\Lambda|$. This, together with \eqref{eq:zeta}, imply that
		$|h_\zeta(\sigma)- h_\xi(\sigma\times\sigma)|\to 0$ 
as $\eps\to0$.
		
It remains to estimate $h_\zeta(\sigma)$. 
Consider the measure preserving system $(X,\sX,\zeta,\sigma)$ where $X=(\Lambda\cup\{0\})^\Z$, $\sX$ is the $\zeta$-completion of the product $\sigma$-algbra, and $\sigma$ is the shift transformation.  Let $E=X\setminus[0]$ be the set of seqeunces $(x_n)_{n\in\Z}$ with $x_0\neq  0$. Note that $\zeta([0])\le\eps$ implies $\zeta(E)\ge 1-\eps$.
We consider the induced system $(E,\sX_E,\zeta_E,\sigma_E)$.
Let $\cQ=\{[a]:a\in\Lambda\cup\{0\}\}$ be the partition of $X$ into cylinder sets of length $1$. 	Then $\cQ_E=\{[a]:a\in\Lambda\}$, $\cQ=\cQ_E\cup\{[0]\}$, and  $\cQ\succcurlyeq\{E,X\setminus E\}$. 
We want to apply Proposition \ref{prop}, but $E$ need not to sweep out $X$ with respect to $\zeta$. 
In that case, there are $0<\alpha<1$ and a shift invariant measure $\hat\zeta$ such that $\zeta=(1-\alpha)\hat\zeta+\alpha\delta_{\mathbf{\bar{0}}}$, where $\delta_{\mathbf{\bar{0}}}$ is the Dirac measure concentrated on the fixed point $\mathbf{\bar{0}}=\ldots 00.00\ldots$ and  $\hat\zeta$ and $\delta_{\mathbf{\bar{0}}}$ are mutually singular.
It follows that $\hat\zeta(\{\mathbf{\bar{0}}\})=0$, hence $E$ sweeps out $X$ with respect to $\hat\zeta$.
Furthermore, $\alpha\le\zeta([0])\le\eps$ and
\[
		\hat\zeta(E)= \frac{\zeta(E)}{1-\alpha}\ge \frac{1-\eps}{1-\alpha}.
\]
		By affinity of the entropy rate \cite[Lemma 3.9]{Gray} we get $h_\zeta(\sigma)=(1-\alpha)h_{\hat\zeta}(\sigma)+\alpha h_{\delta_{\mathbf{\bar{0}}}}(\sigma)=(1-\alpha)h_{\hat\zeta}(\sigma)$.
		It follows that $|h_{\zeta}(\sigma)-h_{\hat\zeta}(\sigma)|\to 0$ as $\eps\to 0$.

Let $\cQ_E^{(-\xi,0]}$ be the partition of $E$ obtained in Proposition \ref{prop}. Since $X\setminus E\in \cQ$, we
easily see that the processes $(T_E,\cQ_E^{(-\xi,0]})$ and $(T_E,\cR)$ where $\cR=\cQ_E\vee\cE$ are isomorphic.
Applying Proposition \ref{prop} we obtain $H_{\hat\zeta_E}(\cR)<\infty$ and
		\[
		\hat\zeta(E) \h_{\hat\zeta_E}(\sigma_E, \cR)=\h_{\hat\zeta}(\sigma, \cR\cup\{[0]\})=\h_{\hat\zeta}(\sigma, \cQ)=h_{\hat\zeta}(\sigma).
		\]
It follows that $|\h_{\hat\zeta_E}(\sigma_E, \cR)-h_{\hat\zeta}(\sigma)
		|\to 0$ 
as $\eps\to 0$.

Consider the entropy rate of the process $(\sigma_E,\cE)$ 
generated 
on $(E,\sX_E,\hat\zeta_E,\sigma_E)$.  Setting
$p_n=\hat{\zeta}(E_n)=\hat{\zeta}\left(\{x\in E:r_E(\sigma_E^{-1}(x))=n\}\right)$ we get a probability vector $\mathbf{p}=(p_n)_{n\in\N}$ on $\N$ with the expected value
		\[
		\mathbb{E}(\mathbf{p})=\sum_{n=1}^\infty np_n =1/\hat\zeta(E)\le 1/\zeta(E).
		\]
Since it is well-known that among all the distributions on $\N$ with expected value $1/p$, the largest entropy is 
$(\HSh(p)+\HSh(1-p))/p$ we get $\h_{{\hat\zeta_E}}(\sigma_E, \cE)\le (\HSh(\eps)+\HSh(1-\eps))/(1-\eps)$. 
		Since $\cR=\cQ_E\vee\cE$ we have
$\h_{\hat\zeta_E}(\sigma_E, \cQ_E)\le \h_{\hat\zeta_E}(\sigma_E,\cR)$ and
		\[
 \h_{\hat\zeta_E}(\sigma_E, \cR)\le \h_{\hat\zeta_E}(\sigma_E, \cQ_E)+\h_{\hat\zeta_E}(\sigma_E, \cE)\le \h_{\hat\zeta_E}(\sigma_E, \cQ_E)+\frac{\HSh(\eps)+\HSh(1-\eps)}{(1-\eps)}.
		\]
		Thus $|\h_{\hat\zeta_E}(\sigma_E, \cQ_E)- \h_{\hat\zeta_E}(\sigma_E, \cR)|\to 0$ as $\eps\to0$.
Summing up, $|\h_{\hat\zeta_E}(\sigma_E, \cQ_E) - h_{\mu}(\sigma)|$ 
approaches $0$ as $\eps\to 0$.
		
		Repeating the same steps, but starting with $x'$ in place of $x$ and generating only along subsequences of the sequence along which $\zeta$ was generated, we produce ``primed versions'' of all objects defined so far. In particular, we have a measure $\hat\zeta'$
		such that taking the same set $E$ as above and the partition $\cQ_{E}$ of $E$ into cylinders of non-zero symbols
		$|\h_{\hat\zeta'_{E}}(\sigma_{E}, \cQ_{E}) - h_{\mu'}(\sigma)|\to0$ as $\eps\to 0$. It remains to compare $\h_{\hat\zeta_{E}}(\sigma_{E}, \cQ_{E})$ with its primed variant $\h_{\hat\zeta'_{E}}(\sigma_{E}, \cQ_{E})$.

We clearly have $\zeta_E=\hat\zeta_E$ and $\zeta'_E=\hat\zeta'_E$. Furthermore, 
$z$ restricted to nonzero entries with nonnegative indices, which coincides with $\kappa$  defined above, gives us a $\cQ_E$-name of a frequency-typical sequence for the induced process $(\sigma_E,\cQ_E)$.
		But $z'$ restricted to nonzero entries also yields $\kappa$ and is frequency-typical for the process $(\sigma_{E}, \cQ_{E})$, hence
		$\h_{\hat\zeta_{E}}(\sigma_{E}, \cQ_{E})=\h_{\hat\zeta'_{E}}(\sigma_{E}, \cQ_{E})$, and the proof is complete.
	\end{proof}

	\begin{rem}
		The above proof indicates also the modulus of the uniform continuity of the entropy function with respect to $\hat f$, namely if two frequency-typicasl sequences are $\eps$-close with respect to $\fbar$, then the entropy rates of the processes generated by these sequences differ by at most \[2\left(2\eta(\eps)+2\eta(1-\eps)+4\eps\log l+\frac{\eta(\eps)+\eta(1-\eps)}{1-\eps}\right).\]
		Note that this number depends on the cardinality $l$ of the alphabet $\Lambda$.
	\end{rem}
	\section{Appendix: The Abramov Formula}
	
	As a by-product of our considerations we present an elementary proof of a general version of the Abramov formula, which \cite{Krengel} attributes to Scheller.
Let $\Part(D)$ (respectively, $\Part_\omega(D)$) stand for the set of all finite measurable partitions of $D\in\sX$ (respectively, all countable  measurable partitions $\cP$ of $D$ with $H_\mu(\cP)<\infty$). Recall that the \emph{Kolmogorov-Sinai entropy} $h_\mu(T)$ of a measure preserving system $(X,\sX,\mu,T)$ is the supremum of entropy rates of all processes generated from the system by taking $\cP\in\Part(X)$, equivalently, by taking $\cP\in\Part_\omega(X)$, that is,
\begin{equation}\label{eq:entropy}
h_\mu(T)=\sup_{\cP\in\Part(X)}\h_\mu(T, \cP)=\sup_{\cP\in\Part_\omega(X)}\h_\mu(T, \cP).
\end{equation}

	\begin{theorem}[The Abramov Formula]\label{thm:Abramov}
		Let $(X,\sX,\mu,T)$ be a probability measure preserving system and let $E\in\sX$ sweep out $X$. 
		Then $h_\mu(T)=\mu(E) h_{\mu_E}(T_E)$.
	\end{theorem}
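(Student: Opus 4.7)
The plan is to deduce the Abramov formula directly from Propositions \ref{prop:main} and \ref{prop} by taking suprema over the right classes of partitions. Both of those propositions assume invertibility, so I would first reduce to the invertible case by passing to the natural extension $(\tilde X,\tilde\sX,\tilde\mu,\tilde T)$ with factor map $\pi\colon\tilde X\to X$, and considering $\tilde E\eqd\pi^{-1}(E)$. One checks that $\tilde E$ sweeps out $\tilde X$, $\tilde\mu(\tilde E)=\mu(E)$, $h_{\tilde\mu}(\tilde T)=h_\mu(T)$, and that the induced system $(\tilde E,\tilde\mu_{\tilde E},\tilde T_{\tilde E})$ is the natural extension of $(E,\mu_E,T_E)$, so $h_{\tilde\mu_{\tilde E}}(\tilde T_{\tilde E})=h_{\mu_E}(T_E)$; proving the formula for $\tilde T$ therefore yields it for $T$.

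For the upper bound $h_\mu(T)\le \mu(E)\,h_{\mu_E}(T_E)$, I would take any finite partition $\cQ$ of $X$ and apply Proposition \ref{prop} to obtain
\[
h_\mu(T,\hat\cQ)=\mu(E)\,h_{\mu_E}(T_E,\hat\cP_E^\cQ)\le \mu(E)\,h_{\mu_E}(T_E),
\]
the last inequality because $\hat\cP_E^\cQ$ is a countable partition with finite static entropy with respect to $\mu_E$ and therefore competes in the supremum defining $h_{\mu_E}(T_E)$. Taking $\sup_\cQ$ and invoking \eqref{eq:hat-q} gives $h_\mu(T)\le \mu(E)\,h_{\mu_E}(T_E)$.

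For the lower bound $\mu(E)\,h_{\mu_E}(T_E)\le h_\mu(T)$, I would run Proposition \ref{prop:main} in reverse. For any countable partition $\cP_E$ of $E$ with $\cP_E\succcurlyeq\cE$ and $H_{\mu_E}(\cP_E)<\infty$, the partition $\cP\eqd\cP_E\cup\{X\setminus E\}$ of $X$ has finite static entropy (a short calculation gives $H_\mu(\cP)=\HSh(\mu(E))+\HSh(\mu(X\setminus E))+\mu(E)H_{\mu_E}(\cP_E)$) and satisfies the hypotheses of Proposition \ref{prop:main}, so
\[
\mu(E)\,h_{\mu_E}(T_E,\cP_E)=h_\mu(T,\cP)\le h_\mu(T).
\]
The only remaining point is that the supremum of $h_{\mu_E}(T_E,\cP_E)$ over finite-entropy partitions $\cP_E$ refining $\cE$ already equals $h_{\mu_E}(T_E)$. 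This holds because for any countable finite-entropy partition $\cR$ of $E$ the refinement $\cR\vee\cE$ still has finite static entropy (bounded by $H_{\mu_E}(\cR)+H_{\mu_E}(\cE)<\infty$, and $H_{\mu_E}(\cE)$ was shown to be finite in Proposition \ref{prop}), and $h_{\mu_E}(T_E,\cR)\le h_{\mu_E}(T_E,\cR\vee\cE)$ by monotonicity; hence restricting the supremum to partitions refining $\cE$ costs nothing.

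The main obstacle, genuinely the heart of the matter, has already been cleared in the proof of Proposition \ref{prop:main}: the identification of $H_\mu(\cP|\cP^{[1,v^n]})$ with $\mu(E)H_{\mu_E}(\cP_E|\cP_E^{[1,n]})$ through the dictionary between full cells of $\cP^{[1,v^n]}$ and cells of $\cP_E^{[1,n]}$. Granted that, the proof of Theorem \ref{thm:Abramov} is essentially two applications of monotonicity of dynamical entropy under refinements together with the right choice of admissible partitions on each side, plus the boundedness of $H_{\mu_E}(\cE)$ inherited from the Kac Lemma.
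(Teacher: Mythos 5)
Your proposal is correct and follows essentially the same route as the paper's proof: reduce to the invertible case via the natural extension, get the upper bound $h_\mu(T)\le\mu(E)h_{\mu_E}(T_E)$ by combining \eqref{eq:hat-q} with Proposition \ref{prop} and the finite-static-entropy characterisation of $h_{\mu_E}(T_E)$, and get the lower bound by feeding partitions of $E$ refined by $\cE$ (with $X\setminus E$ adjoined) into Proposition \ref{prop:main} and using monotonicity. The only cosmetic difference is that the paper runs the lower bound over finite partitions $\cQ$ of $E$ via $\cQ^{(E)}=(\cQ\vee\cE)\cup\{X\setminus E\}$, whereas you phrase it over countable finite-entropy partitions already refining $\cE$; both yield the same supremum.
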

	\begin{proof}
Using the natural extension the proof reduces to the invertible case. For every $\cQ\in\Part(E)$, the partition $\cQ^{(E)}=(\cQ\vee \cE)\cup\{X\setminus E\}\in\Part_\omega(X)$ satisfies the assumptions of Proposition \ref{prop:main} and $\cQ^{(E)}_E\succcurlyeq \cQ$, so $\mu(E)\h_{\mu_E}(T_E,\cQ)\le h_{\mu}(T,\cQ^{(E)})$.
Using \eqref{eq:entropy} we get		\[
		\mu(E)\h_{\mu_E}(T_E)=\mu(E)\sup_{\cQ\in\text{Part}(E)} \h_{\mu_E}(T_E,\cQ) \le \sup_{\cQ\in\text{Part}(E)}\h_{\mu}(T,\cQ^{(E)})\le h_\mu(T).
		\]
For $\cQ\in\Part(X)$ we set $\hat\cQ=\cQ\vee\{E,X\setminus E\}$.  Let $\hat\cQ_E^{(-\xi,0]}$ be the partition of $E$ obtained in Proposition \ref{prop}.
%
We have
		\[
		h_\mu(T)=\sup_{\cQ\in\Part(X)} \h_\mu(T,\hat\cQ)= \mu(E)
\sup_{\cQ\in\Part(X)} \h_{\mu_E}(T_E,\hat\cQ^{(-\xi,0]})
		\le \mu(E) h_{\mu_E}(T_E),
\]
where the first equality uses that $\hat\cQ\succcurlyeq\cQ$	for every $\cQ\in\Part(X)$, the second equality follows from Proposition \ref{prop}, and the inequality follows from \eqref{eq:entropy}.
	\end{proof}
	\section{Acknowledgments}
The authors would like to thank Michal Kupsa for many discussions on the subject of the paper.
Martha \L{}\k{a}cka acknowledges the support of the National Science Centre (NCN), Poland, grant \emph{Preludium} no. 2015/19/N/ST1/00872 and the doctoral scholarship \emph{Etiuda} no. 2017/24/T/ST1/00372. Dominik Kwietniak acknowledges the support of the National Science Centre (NCN), Poland, grant \emph{Opus} no. 2018/29/B/ST1/01340. Tomasz Downarowicz acknowledges the support of the National Science Centre (NCN), Poland, grant \emph{Harmonia} no. 2018/30/M/ST1/00061. 	
The authors are greatly indebeted to the anonymous refrees for several comments which helped to improve the paper.
	\bigskip\noindent

\Addresses


\begin{thebibliography}{xx}
\bibitem{CDS} J.-P. Conze, T. Downarowicz, J. Serafin, \emph{Correlation of sequences and of measures, generic points for joinings and ergodicity of certain cocycles}. Trans. Amer. Math. Soc. \textbf{369} (2017), no. 5, 3421--3441. 
\bibitem{DGS}	M. Denker, C. Grillenberger, K. Sigmund, \emph{Ergodic theory on compact spaces}.  Lecture Notes in Mathematics, Vol. 527. Springer-Verlag, Berlin-New York, 1976. 
\bibitem{Downarowicz} T.~Downarowicz,
\emph{Entropy in dynamical systems}, volume 18 of New Mathematical Monographs. Cambridge University Press, Cambridge, 2011.
\bibitem{Feldman} J. Feldman, \emph{New $K$-automorphisms and a problem of Kakutani}, Israel J. Math. \textbf{24} (1976), no. 1, pp. 16--38. 
\bibitem{Gray} Robert M. Gray, \emph{Entropy and information theory}. Second edition. Springer, New York, 2011. xxviii+409 pp. ISBN: 978-1-4419-7969-8; 978-1-4419-7970-4  
\bibitem{GNS} R. M. Gray, D. L. Neuhoff and P. C. Shields, \emph{A generalization of Ornstein's $\dbar$ distance with applications to Information Theory},
The Annals of Probability, 3, 2 (1975).
\bibitem{Kalikow} S. Kalikow, R. McCutcheon, \emph{An outline of ergodic theory.} Cambridge Studies in Advanced Mathematics, 122. Cambridge University Press, Cambridge, 2010. 
\bibitem{Katok} A. Katok, \emph{Monotone equivalence in ergodic theory} (Russian), Izv. Akad. Nauk SSSR Ser. Mat. \textbf{41} (1977), no. 1, 104--157, 231.
\bibitem{Krengel} U. Krengel, \emph{On certain analogous difficulties in the investigation of flows in a probability space and of transformations in an infinite measure space}. 1969 Functional Analysis (Proc. Sympos., Monterey, Calif., 1969) 75--91 Academic Press, New York. 
\bibitem{KL}  D. Kwietniak, M. \L{}\k{a}cka, {\emph{Feldman-Katok pseudometric and the GIKN construction of nonhyperbolic ergodic measures}}, arXiv:1702.01962, pp.~19.
\bibitem{KLO} D. Kwietniak, M.  \L{}\k{a}cka, P. Oprocha, {\emph{Generic Points for Dynamical Systems with Average Shadowing}}, Monatshefte f\"ur Mathematik 183 (4), 2017, pp.~625--648.
\bibitem{Ornstein}	D.S. Ornstein, 	\emph{An application of ergodic theory to probability theory}, Annals of
	Probability, 1, 1 (1973).
\bibitem{ORW} D. Ornstein, D. Rudolph, B. Weiss, \emph{Equivalence of measure preserving transformations}, Mem. Amer. Math. Soc. \textbf{37} (1982), no. 262. 
\bibitem{Petersen} Karl Petersen, \emph{Ergodic theory}. Cambridge Studies in Advanced Mathematics, 2. Cambridge University Press, Cambridge, 1983. 
\bibitem{Rudolph} Daniel J. Rudolph, \emph{Fundamentals of measurable dynamics. Ergodic theory on Lebesgue spaces}. 
Oxford University Press, New York, 1990. 
\bibitem{Saussol}  Benoit Saussol, \emph{An introduction to quantitative Poincar\'{e} recurrence in dynamical systems}, Rev. Math. Phys. 21 (2009), no. 8, 949--979.
\bibitem{Serafin} J. Serafin, \emph{Finitary codes, a short survey.} Dynamics \& stochastics, 262--273, IMS Lecture Notes Monogr. Ser., 48, Inst. Math. Statist., Beachwood, OH, 2006.
\bibitem{Shields} Paul C. Shields. The ergodic theory of discrete sample paths, 
American Mathematical Society, Providence, RI, 1996.
\bibitem{Shannon1} C. E. Shannon, \emph{A Mathematical Theory of Communication}. Bell System Technical Journal, Volume 27, Issue 3, July 1948.
\bibitem{Shannon2} C. E. Shannon, \emph{Coding theorems for a discrete source with a fidelity criterion}. Institute of Radio Engineers, International Convention Record, vol. 7, 1959.
\bibitem{Vershik} A. M.~Vershik, The Kantorovich metric: the initial history and little-known applications.
Zap. Nauchn. Sem. S.-Peterburg. Otdel. Mat. Inst. Steklov. (POMI) 312 (2004), Teor. Predst. Din. Sist. Komb. i Algoritm. Metody. 11, 69--85, 311; translation in J. Math. Sci. (N.Y.) 133 (2006), no. 4, 1410--1417.
\bibitem{Villani} C. Villani, Optimal transport. Old and new. Grundlehren der Mathematischen Wissenschaften 
338. Springer-Verlag, Berlin, 2009. 
\bibitem{Weiss} B. Weiss, \emph{Single orbit dynamics}, CBMS Regional Conference Series
in Mathematics, vol.~95, American Mathematical Society, Providence, RI, 2000. 
\end{thebibliography}
\end{document}